\theoremstyle{plain}
\newtheorem{thm}{Theorem}
\newtheorem{lem}{Lemma}
\newtheorem{Bob}{Definition}
\theoremstyle{remark}
\newtheorem{rem}{Remark}
\begin{document}
\
\author[J.\ Chaika]{Jon Chaika}

\address{Department of Mathematics, Rice University, Houston, TX~77005, USA}

\email{jonchaika@math.uchicago.edu}

\title{There exists a topologically mixing Interval Exchange Transformation}

\maketitle
\begin{abstract}
We prove the existence of a topologically mixing interval exchange transformation and that no interval exchange is topologically mixing of all orders.
\end{abstract}

\section{Introduction}
The main result of this paper is that there exists a topologically mixing IET answering a question of Boshernitzan. The construction is based on the work of Keane \cite{nonue}. %For further development of this construction see \cite{me}. 
The plan of the paper is as follows: First, we state the results of the paper and introduce terminology. Second, we describe the Keane construction presented in \cite{nonue}, which is used to prove topological mixing. Third, we prove topological mixing. Last, we prove that no IET is topologically mixing of all orders.
\begin{Bob} Let $\Delta_{n-1}=\{(l_1,...,l_n):l_i>0, l_1+...+l_n=1 \} $ be the $(n-1)$-dimensional simplex. Given $L=(l_1,l_2,...,l_n)\in \Delta_{n-1}$
 we can obtain n subintervals of the unit
interval: $I_1=[0,l_1) ,
I_2=[l_1,l_1+l_2),...,I_n=[l_1+...+l_{n-1},1)$. If we are also given
 a permutation on n letters $\pi$ we obtain an n-\emph{Interval Exchange Transformation}  $ T \colon [0,1) \to
 [0,1)$ which exchanges the intervals $I_i$ according to $\pi$. That is, if $x \in I_j$ then \begin{center} $T(x)= x - \underset{k<j}{\sum} l_k +\underset{\pi(k')<\pi(j)}{\sum} l_{k'}$.\end{center}
\end{Bob}
Interval exchange transformation will henceforth be abbreviated IET.
\begin{rem} The various conventions for naming the permutation of IETs can be a source of confusion. The naming convention this paper uses says that if an IET has permutation $(4213)$ then the 4$^{th}$ interval is placed first by $T$, the 2$^{nd}$ interval is placed second the 1$^{st}$ is placed third and the 3$^{rd}$ is placed last. It does \emph{not} say that the 4$^{th}$ interval gets sent to the 2$^{nd}$.
\end{rem}

\begin{Bob} Let $X$ be a topological space. A dynamical system $T: X \to X$ is said to be \emph{topologically mixing} if for any two nonempty open sets $U$, $V$ there exists $N_{U,V}:=N$ such that $T^n(U) \cap V \neq \emptyset$ for all $n \geq N$.
\end{Bob}
\begin{thm} \label{main} There exists a topologically mixing 4-IET.
\end{thm}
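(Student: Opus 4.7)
The plan is to construct the topologically mixing 4-IET $T$ as a controlled limit of a sequence $(T_k)_{k\ge 1}$ of 4-IETs, following the combinatorial framework of Keane's construction in \cite{nonue}. First fix a countable basis $\{W_i\}_{i\ge 1}$ for the topology on $[0,1)$ consisting of open subintervals with rational endpoints. Topological mixing of $T$ is equivalent to requiring that for each ordered pair $(i,j)$ there exists $N=N(i,j)$ with $T^n(W_i)\cap W_j\neq\emptyset$ for all $n\ge N$. Enumerate these pairs as $\{(i_k,j_k)\}_{k\ge 1}$ and choose an increasing sequence of target times $M_1<M_2<\cdots$; at stage $k$ we will address the pair $(i_k,j_k)$.

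At each stage $k$ we inductively produce a 4-IET $T_k$ with rational length data, so that the first $M_k$ iterates are exactly computable on a finite partition, satisfying the following: for every $\ell\le k$ and every $n\in[N_\ell,M_k]$, the set $T_k^n(W_{i_\ell})\cap W_{j_\ell}$ contains an open interval, and no discontinuity of $T_k^n$ meets a fixed neighborhood of $W_{i_\ell}$ (a \emph{robust} intersection). The step $T_k\mapsto T_{k+1}$ is a perturbation of the length parameters restricted to Rauzy--Veech cells of combinatorial depth much greater than $M_k$, so that the first $M_k$ iterates of $T_{k+1}$ agree with those of $T_k$ on a neighborhood of each relevant $W_{i_\ell}$. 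Consequently all previously certified robust intersections persist under $T_{k+1}$, and ultimately under the limit $T$, which exists because the successive perturbations are summable. With the parameters chosen in the manner of \cite{nonue}, the limit $T$ satisfies Keane's IDOC and is in particular minimal; combined with the persistence of all robust intersections, this yields topological mixing for $T$.

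The main obstacle is the inductive step itself: starting from $T_{k-1}$ one must exhibit a refinement $T_k$ that preserves every previously certified robust intersection while also guaranteeing $T_k^n(W_{i_k})\cap W_{j_k}\neq\emptyset$ for \emph{every} $n$ in the long window $[N_k,M_k]$. This is a quantitative orbit-density statement: the forward orbit of the small interval $W_{i_k}$ under $T_k$ must sweep across the target $W_{j_k}$ at each integer time in a prescribed range. The crux is to use Keane's rational-periodic structure together with the freedom to insert a very fine perturbation to arrange that, under a suitable return map of $T_k$ to a Rokhlin tower containing $W_{i_k}$, the images $T_k^n(W_{i_k})$ revisit $W_{j_k}$ at every step of the window, while simultaneously keeping the discontinuities of $T_k^n$ away from each $W_{i_\ell}$ with $\ell<k$ during the times already addressed. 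The delicate point is balancing the combinatorial constraints so that a single choice of refinement achieves both the new mixing requirement and preservation of all earlier ones; this is where the specific structure of Keane's towers, and in particular that their base dynamics is an irrational rotation in the limit, is used.
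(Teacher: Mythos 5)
Your proposal is a scaffolding for a residual-set/inductive-approximation argument, but the mathematical content of the theorem is concentrated in exactly the step you defer: guaranteeing that $T_k^n(W_{i_k})\cap W_{j_k}\neq\emptyset$ for \emph{every} $n$ in a long window. You name this as ``the main obstacle'' and ``the crux'' and then offer no mechanism for it beyond an appeal to ``Keane's rational-periodic structure'' and ``a very fine perturbation.'' This is a genuine gap, not a detail: for a $d$-IET, $T^n(U)$ is a union of at most $n(d-1)+1$ intervals of total length $|U|$, so hitting a fixed target at every single time in a window of length tending to infinity is a strong arithmetic statement about how the pieces of the orbit are distributed, and it is precisely what the paper's conditions (1)--(5) and its two lemmas are engineered to produce. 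Concretely, the paper arranges the return times so that $b_{k,2}$ is prime and coprime to $b_{k,3}$; then pieces of the tower over $I_3^{(k+2)}$ are inserted into the tower over $I_2^{(k+1)}$ at time increments of $b_{k+2,3}$, which, being coprime to the height $b_{k+1,2}$, sweep through every residue class and hence every level; the size conditions (4) and (5) force each inserted piece to persist until the next insertion and make the two resulting time windows $[c_k,d_k]$ and $[d_k,c_{k+1}]$ overlap and exhaust all large times. None of this (or any substitute for it) appears in your argument, so the proposal does not prove the theorem.

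Two secondary points. First, your requirement that ``no discontinuity of $T_k^n$ meets a fixed neighborhood of $W_{i_\ell}$'' for all $n$ up to $M_k$ cannot hold as stated: for a minimal IET the discontinuities of $T^n$ accumulate everywhere as $n$ grows, so any fixed interval eventually contains many of them; what you need is that the particular sub-piece of $W_{i_\ell}$ carried into $W_{j_\ell}$ lies robustly inside a continuity interval of $T^n$. Second, your remark that the base dynamics of Keane's towers ``is an irrational rotation in the limit'' is not correct for this construction: the induced map on $I_4^{(k)}$ is again a $(4213)$ 4-IET, and it is the full $4\times 4$ matrices $A_{m_k,n_k}$, not a rotation, that drive the argument. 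The overall flavor of your scheme (handling a countable basis pair by pair, with perturbations deep in the Rauzy--Veech expansion) is closer to the residual-set result the author mentions in a remark than to the explicit construction actually carried out in the paper, but in either formulation the window-covering lemma must be proved, and it is absent here.
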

\begin{rem} It has been shown that no IET satisfies the stronger property of being measure theoretically mixing \cite{nomixing}. It was shown \cite{fer} that minimal, uniquely ergodic dynamical systems of linear block growth can not be measure theoretically mixing. No 3-IET can be topologically mixing \cite{bc iet}. However, almost every IET that is not of rotation type satisfies the weaker property of being topologically weak mixing \cite{top w.mix}. In fact, almost every IET that is not of rotation type is measure theoretically weak mixing \cite{w.mix}. This was earlier shown for almost every 3-IET \cite{kat step}, almost every type W IET \cite{metric} and every type W IET satisfying an explicit Diophantine condition (Boshernitzan type, which almost every IET satisfies) \cite{bosh nog}. For type W IETs it has been shown that idoc (the Keane condition) implies topological weak mixing \cite{cdk}. The formulation of type W used in the two previous results is in \cite{CN}.
\end{rem}
\begin{rem} Since writing this paper we were able able to show that a residual (dense $G_{\delta}$) set of IETs with permutation $(n,n-1,...,1)$ are topologically mixing for any $n\geq 4$. The methods of this proof are different.
\end{rem}

\begin{rem} The IET exhibited will be topologically mixing with respect to the standard topology on [0,1) and also the finer topology given by considering the IET as a symbolic dynamical system. As a symbolic dynamical system the IET is continuous.
\end{rem}
A complementary result to Theorem \ref{main} is:
\begin{thm} No IET is topologically mixing of all orders.
\end{thm}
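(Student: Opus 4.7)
The strategy is to show that every IET $T$ fails to be topologically $2$-mixing, by exploiting a partial rigidity that any minimal IET inherits from the Rauzy–Veech tower structure.  (If $T$ is not minimal then it clearly fails every higher‐order mixing; hence we may assume $T$ is minimal.)

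First I would extract \emph{rigidity intervals}.  Rauzy–Veech induction produces, at each stage $k$, Rokhlin towers whose $h$ levels $J,\,TJ,\dots,T^{h-1}J$ are pairwise disjoint translates of a base interval $J$ inside $[0,1)$.  Applying pigeonhole to these $h$ disjoint translates (which together fit in an interval of length $1$), two levels $T^{j_1}J$ and $T^{j_2}J$ must lie within distance $1/h$ of each other.  Setting $I_k := T^{j_1}J$ and $n_k := j_2 - j_1$, the map $T^{n_k}\!\!\restriction_{I_k}$ is a translation by some $\tau_k$ with $|\tau_k|\leq 1/h$.  Passing to deeper Rauzy–Veech stages produces sequences $n_k\to\infty$, $|\tau_k|\to 0$, and corresponding intervals $I_k$ with $T^{n_k}\!\!\restriction_{I_k}$ equal to translation by $\tau_k$.

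Next I would use these rigidity data to obstruct $2$-mixing.  Suppose, for contradiction, that $T$ is topologically $2$-mixing and fix a nonempty open $U_0$ together with two open intervals $U_1,U_2$ with $\operatorname{dist}(U_1,U_2) > 2\varepsilon$.  The $2$-mixing assumption provides some $N$ so that
\[
U_0 \cap T^{-m}U_1 \cap T^{-(m+n_k)}U_2 \neq \emptyset
\]
for all $m\geq N$ and $n_k\geq N$.  Choose $k$ large enough that $|\tau_k| < \varepsilon$.  If $x$ lies in the triple intersection and $y := T^m x \in I_k$, then $T^{n_k}y = y+\tau_k$, so $y\in U_1$ and $y+\tau_k \in U_2$, contradicting $\operatorname{dist}(U_1,U_2) > 2\varepsilon$.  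Hence every $x$ in the triple intersection must satisfy $T^m x\notin I_k$.

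The main obstacle is closing the argument despite the rigidity interval $I_k$ being shrinking, so that one cannot arrange $U_1 \subset I_k$ directly.  To handle this, I would pair up close levels inside \emph{each} of the $d$ Rauzy–Veech towers at stage $k$, producing a collection of rigidity intervals whose union has lower density bounded away from zero in $[0,1)$; and I would choose $U_1$ aligned with the Rauzy–Veech cylinders at a suitable stage so that $U_1$ is contained in the union of rigidity intervals sharing a common return time $n_k$.  The hard work is the combinatorial bookkeeping needed to select such aligned open sets uniformly in $k$, and to keep track of the translation amounts $\tau_k$ across all the paired levels — this is where the bulk of the technical labor lies.
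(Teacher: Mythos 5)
Your argument stops exactly where the proof would have to begin. The contradiction step, as you yourself record, only yields ``every $x$ in the triple intersection must satisfy $T^m x\notin I_k$'' --- which is no contradiction at all unless the fixed open set $U_1$ is forced to meet (indeed, to be essentially covered by) the rigidity set at time $n_k$. Your proposed patch does not supply this. First, the union of the paired-up levels at stage $k$ having lower density bounded away from zero does not put any \emph{fixed} open interval $U_1$ inside that union for infinitely many $k$: the rigidity intervals move around as $k$ grows, and ``positive density'' is compatible with a fixed $U_1$ missing them entirely, or meeting them only in a portion on which the argument gives nothing. Second, and more fatally, the pairs of close levels you extract from the $d$ different Rauzy--Veech towers at stage $k$ come with \emph{different} index gaps $n_k^{(i)}$ and different displacements $\tau_k^{(i)}$; to run your contradiction you need a single time $n$ at which \emph{all} of $U_1$ is translated by less than $\varepsilon$, and there is no mechanism here producing a common return time across towers. (You are in effect asserting a topological partial-rigidity property --- $T^{n}$ uniformly close to the identity on a fixed open set along a sequence $n\to\infty$ --- that general minimal IETs are not known to have; note the paper's Theorem~\ref{main} shows some IETs \emph{are} topologically mixing, so any rigidity you extract must be genuinely partial, and then the covering problem above is the whole difficulty.) Finally, you are aiming at a strictly stronger statement than the theorem: failure of topological $2$-mixing (triple intersections). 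The theorem only requires failure at \emph{some} order, and the paper does not establish, nor does your sketch, that failure occurs as low as order three.

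For contrast, the paper's proof avoids rigidity altogether. It induces twice: first on an interval $V$ with all return times large, so that $T|_V$ is an $s$-IET ($s\le d$) whose towers' levels are each contained in or disjoint from a prescribed $J$; then on each subinterval $U_i$ of $V$, producing at most $d^2$ return times $r_{i,j}$. The key observation is that \emph{every} point of $J$ returns to $J$ at one of these $\le d^2$ times, because a point $x=T^k(u)$ with $u\in U_{i,j}$ satisfies $T^{r_{i,j}}(x)=T^k\bigl(T^{r_{i,j}}(u)\bigr)\in T^k(U_i)\subset J$. Hence for disjoint $J,J'$ the intersection $J\cap\bigcap_{i,j}T^{-r_{i,j}}(J')$ is empty, violating topological mixing of order $d^2+1$ at arbitrarily late times. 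If you want to salvage your write-up, the realistic options are either to adopt this double-induction counting argument, or to genuinely prove the uniform partial-rigidity statement your sketch presupposes --- and the latter is not ``combinatorial bookkeeping'' but the entire content of a (currently unavailable) stronger theorem.
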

\begin{rem} Another result along these lines is proved in \cite{dk}, which shows that no substitution dynamical system can be measure theoretically mixing, but that there exists a topologically mixing substitution dynamical system. However, no substitution dynamical system can be topologically mixing of all orders. Some substitution dynamical systems have IETs that are obviously conjugate to them. However, the topologically mixing substitution dynamical system in \cite{dk} is a very particular substitution on two letters, which has a closely related substitution that is not topologically mixing. The IETs obviously corresponding to substitutions on two (or even three) letters are never topologically mixing.
\end{rem}

\begin{Bob} Given an IET $T: [0,1) \to [0,1)$ and a subinterval $J$,
let ${T|_J \colon J\to J}$ denote the first return map induced by $T$ on $J$. That is, if $x\in J$ let $$r(x)=\min\{n>0: T^n(x) \in J\} \text{ and }  T|_J(x)=T^{r(x)}(x) \text{ if }r(x) \text{ is finite.}$$  %we can examine the induced map on $J$ (aka the [Poincare] first return map), $T|_J$.
 A tower over $J$ is the consecutive images of a subinterval before it returns to $J$. We call the individual images levels.
\end{Bob}
\begin{rem} The discontinuities for the induced map of an IET on an interval $[a,b)$ come from the preimages of discontinuities the IET and preimages of the endpoint. Therefore, if one chooses the endpoints carefully one gets an IET on at most the same number of intervals.
\end{rem}
\begin{Bob}\label{name} $I_j^{(1)}$ denotes the $j^{th}$ subinterval of the IET given by inducing on the $4^{th}$ interval. $I_j^{(k+1)}$ denotes the $j^{th}$ subinterval of the IET given by inducing on $I_4^{(k)}$.
\end{Bob}
\begin{Bob} Let $O(I_j^{(k)})$ denote the disjoint images under $T$ of $I_j^{(k)}$ before first return to $I^{(k)}$.
\end{Bob}
\begin{rem} This is the tower over $I^{(k)}$ given by images of $I_j^{(k)}$.
\end{rem}
\begin{Bob} Set $e_1= \left[
\begin{array}{c}
1\\ 0\\ 0\\ 0 
\end{array} \right]$, $e_2= \left[
\begin{array}{c}
0\\ 1\\ 0\\ 0 
\end{array} \right]$, $e_3= \left[
\begin{array}{c}
0\\ 0\\ 1\\ 0 
\end{array} \right]$, $e_4= \left[
\begin{array}{c}
0\\ 0\\ 0\\ 1 
\end{array} \right]$.
\end{Bob}
\begin{rem} Some of the above terminology is nonstandard, but convenient for our purposes.
\end{rem}
\section{The Keane construction}
This construction is given in \cite{nonue} and it is included here for completeness.
Consider IETs with permutation $(4213)$. Observe that the second interval gets shifted by $l_4-l_1$. If this difference is small relative to $l_2$ then much of $I_2$ gets sent to itself. At the same time, pieces of $I_3$ do not reach $I_2$ until they have first reached $I_4$. This is the heart of the Keane construction. The details of the Keane construction are centered  around iterating this procedure. 
  Keane showed that by choosing the lengths appropriately one could ensure that $T|_{I_4^{(1)}}$ is a (2413) IET. Name the intervals in reverse order and we once again get a (4213) IET. Moreover, Keane showed that for any choice $m,n\in \mathbb{N}$ one can find an IET whose landing pattern of $I^{(1)}_j$ is given by the following matrix. %To control this it relies on the fact that if one names the intervals in reverse order then one can describe the movement of $I_j^{(1)}$ before the first return by the following matrix:
\begin{center} $A_{m,n}=
\left(
\begin{array}{cccc}
0 & 0 & 1 &1 \\
m-1 & m & 0 & 0 \\
n & n & n-1 & n \\
1 & 1 & 1 & 1 \end{array} \right)$; $\quad$ $m, n \in \mathbb{N} =\{1, 2, ...\} $.
\end{center} 
%Provided that $m,n \in \mathbb{N}$ any such matrix can provide the travel of subintervals of $I_4$. 
In order to see this, pick lengths for $I^{(1)}$ and write them as a column vector. Now assign lengths to the original IET by multiplying this column vector by $A_{m,n}$. The induced map will travel according to this matrix by construction. For instance, if one chooses lengths $[\frac 1 4 ,\frac 1 4 ,\frac 1 4 ,\frac 1 4]$ for $I_4^{(1)}, ...,I_1^{(1)}$ one gets lengths of 
$$[\frac{2}{2+2m-1+4n-1+4}, \frac{2m-1}{2m+4n+4}, \frac{4n-1}{2m+4n+4}, \frac {4}{2m+4n+4}]$$ for the original IET (after renormalizing).
For any finite collection of matrices one can just iterate this construction (assign lengths for $I^{(k)}$ by multiplying the lengths of $I^{(k+1)}$ by $A_{m_k,n_k}$, multiply the resulting column vector by $A_{m_{k-1},n_{k-1}},...$). Compactness (of $\Delta_3$, which can be thought of as the parameterizing space of (4213) IETs) ensures that we can pass to an infinite sequence of these matrices.

Since the intervals are named in reverse order, the discontinuity (under the induced map) between $I_2^{(1)}$ and $I_3^{(1)}$ is given by $T^{-1}(\delta_1)$ where $\delta_1$ denotes the discontinuity between $I_1$ and $I_2$. As the first row of the matrix suggests ${I_1=T(I_4^{(1)} \cup I_3^{(1)})}$. 
The discontinuity (under the induced map) between $I_1^{(1)}$ and $I_2^{(1)}$ is given by $T^{-m}(\delta_2)$ where $\delta_2$ denotes the discontinuity between $I_2$ and $I_3$. As the second row of the matrix suggests $$I_2=T(I_2^{(1)} \cup I_1^{(1)})\cup T^2(I_2^{(1)} \cup I_1^{(1)}) \cup ...\cup T^{m-1}(I_2^{(1)} \cup I_1^{(1)}) \cup T^m (I_2).$$ 
The discontinuity (under the induced map) between $I_3^{(1)}$ and $I_4^{(1)}$ is given by $T^{-n-1}(\delta_3)$ where $\delta_3$ denotes the discontinuity between $I_3$ and $I_4$.
 As the third row of the matrix suggests $$I_3= T^m(I_1^{(1)}) \cup T^{m+1}(I_2^{(1)}) \cup T^2(I_4^{(1)} \cup I_3^{(1)}) \cup T^{m+1}(I_1^{(1)}) \cup T^{m+2}(I_2^{(1)}) \cup T^3(I_4^{(1)} \cup I_3^{(1)}) \cup$$
 $$... \cup T^{m+n-1}(I_1^{(1)}) \cup T^{m+n}(I_2^{(1)}) \cup T^n(I_4^{(1)} \cup I_3^{(1)}) \cup T^{m+n}(I_1^{(1)}) \cup T^{m+n+1}(I_2^{(1)}) \cup T^{n+1}(I_4^{(1)}).$$  $I_4=I_4^{(1)} \cup I_3^{(1)} \cup I_2^{(1)} \cup I_1^{(1)}$. 
As the columns of the matrix suggest, this is also $$I_4= T^{n+1}(I_3^{(1)}) \cup T^{m+n+1}(I_2^{(1)}) \cup T^{m+n}(I_1^{(1)}) \cup T^{n+2}(I_4^{(1)}).$$
 To summarize, the composition of $I_j$ can be given by the $j^{th}$ row of the matrix. The travel before first return of $I_j^{(1)}$ can be given by the $j^{th}$ column. Additionally, because the intervals were named in reverse order, the permutation of the induced map is once again $(4213)$.

It is important for this construction that everything be iterated. The composition of $I_j^{(k)}$ in pieces of $I^{(k+r)}$ is given by $e_j^{\tau} A_{m_k,n_k}...A_{m_{k+r-1},n_{k+r-1}}$ (where $e_j^{\tau}$ denotes the transpose of $e_j$). Likewise, the travel of $I_j^{(k+r)}$ under $T|_{I^{(k)}}$ before first return to $I^{(k+r)}$ is given by $A_{m_k,n_k}...A_{m_{k+r-1},n_{k+r-1}}e_j$.

Now for some explicit statements about the travel of subintervals of $I^{(k)}$ under the induced map $T|_{I^{(k)}}$. When $I_3^{(k)}$ returns to $I^{(k)}$ it entirely covers $I_4^{(k)}$. It is a subset of $I_3^{(k)} \cup I_4^{(k)}$.
When $I_4^{(k)}$ returns to $I^{(k)}$ it entirely covers $I_1^{(k)}$. It intersects $I_2^{(k)}$. Moreover part of this intersection will stay in $O(I_2^{(k)})$ for the next $m_{k}b_{k,2}$ images (the other part $(m_{k}-1)b_{k,2}$ images).
When $I_2^{(k)}$ returns to $I^{(k)}$ it intersects $I_3^{(k)}$. Moreover this piece of intersection will stay in $O(I_3^{(k)})$ for the next $n_{k}b_{k,3}$ images.

\begin{Bob} Let $b_{k,i}$ be the first return time of $I_i^{(k)}$ to $I^{(k)}$. 
\end{Bob}
\begin{rem} $b_{k,2}=m_{k-1}b_{k-1,2}+n_{k-1}b_{k-1,3}+b_{k-1,4}$ and $b_{k,3}=b_{k-1,1}+(n_{k-1}-1)b_{k-1,3}+b_{k-1,4}$.
\end{rem}
%\begin{rem} There are $m_{k}$ pieces of $O(I_2^{(k+1)})$ in $I_2^{(k)}$. There are $n_k-1$ pieces of $O(I_3^{(k+1)})$ in $I_3^{(k)}$.
%\end{rem}

Some facts to keep in mind:
\begin{enumerate}
\item The choice of $n_k$ has no effect on $b_{i,2}$ for $i \leq k$.
\item The choice of $n_k$ has no effect on $b_{i,3}$ for $i \leq k$.
\item The choice of $m_k$ has no effect on $b_{i,2}$ for $i \leq k$.
\item The choice of $m_k$ has no effect on $b_{i,3}$ for $i \leq k+1$.
\end{enumerate}
\section{There exists a topologically mixing Keane IET}

Conditions for $b_{k,2}$ and $b_{k,3}$ to ensure topological mixing:
\begin{enumerate}
\item $b_{k,2}$ is prime for all $k$.
\item $b_{i,2} \not | b_{k,3}$ for all $i<k$.
\item The group of multiplicative units mod $\underset{i=1}{\overset{k}{\Pi}} b_{i,2}$ has more than $\frac 1 2 \underset{i=1}{\overset{k}{\Pi}} b_{i,2}$ elements.
\item $b_{k,2}b_{k+1,3}+b_{k+1,3}+b_{k,4}+b_{k-1,4}<m_{k}b_{k,2}.$ 
%Before problems appear in the tower over $I_2^{(k)}$, the tower over $I_3^{(k+1)}$ takes care of things.
\item $b_{k,3}b_{k,2}+b_{k,2}<n_k b_{k,3}.$ 
%Before problems appear in the tower over $I_3^{(k)}$, the tower over $I_2^{(k)}$ takes care of things.

%\item $b_{k+1,3}$ is much bigger than $b_{k,2}$ is much bigger than $b_{k,3}$
\end{enumerate}
Theorem 1 will be proven by showing that any Keane IET  chosen in this way is topologically mixing. We first show that the set of such IETs is nonempty.
\begin{lem} We can choose $b_{k,2}$ and $b_{k,3}$ to fulfill these conditions.
\end{lem}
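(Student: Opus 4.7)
The plan is an inductive construction of $\{m_k\}$ and $\{n_k\}$, choosing $n_k$ before $m_k$ at each stage $k$. Suppose that at the start of stage $k$ the values $m_0,n_0,\ldots,m_{k-1},n_{k-1}$ have been fixed and that (1)--(3) already hold for indices $\leq k$, while (4)--(5) hold for indices $<k$; the numbers $b_{k,j}$ are then all determined, independently of the still-unchosen $m_k,n_k$. A preliminary observation I use repeatedly is that $\gcd(b_{k,3},b_{i,2})=1$ for every $i\leq k$: for $i<k$ this is (2) together with the primality (1) of $b_{i,2}$, and for $i=k$ it follows because (4) at level $k-1$ gives $m_{k-1}b_{k-1,2}>b_{k-1,2}\,b_{k,3}$, hence $b_{k,2}\geq m_{k-1}b_{k-1,2}>b_{k,3}>0$, so the prime $b_{k,2}$ cannot divide $b_{k,3}$.

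Next I choose $n_k$. The recursion $b_{k+1,3}=b_{k,1}+(n_k-1)b_{k,3}+b_{k,4}$ shows that varying $n_k$ by $1$ shifts $b_{k+1,3}$ by $b_{k,3}$, which is invertible modulo each $b_{i,2}$ with $i\leq k$ by the coprimality above. The Chinese remainder theorem therefore lets me prescribe $n_k$ freely modulo $\prod_{i=1}^{k}b_{i,2}$ and take it arbitrarily large. I pick $n_k$ satisfying (5) (a lower bound only), avoiding the single bad residue modulo each $b_{i,2}$ ($i\leq k$) that would give $b_{i,2}\mid b_{k+1,3}$ (this yields (2) at stage $k+1$), and, at the prime $b_{k,2}$, also avoiding the residue making $b_{k,2}\mid n_k b_{k,3}+b_{k,4}$. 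These finitely many congruence constraints are simultaneously satisfiable since only a bounded number of residues is forbidden at each prime.

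Finally I choose $m_k$. Because $\gcd(n_k b_{k,3}+b_{k,4},\,b_{k,2})=1$ by construction, Dirichlet's theorem on primes in arithmetic progressions yields arbitrarily large $m_k$ for which $b_{k+1,2}=m_k b_{k,2}+n_k b_{k,3}+b_{k,4}$ is prime, giving (1) at stage $k+1$. I pick any such $m_k$ that is also large enough to satisfy (4) at level $k$ and large enough that the resulting prime $b_{k+1,2}$ preserves $\prod_{i=1}^{k+1}(1-1/b_{i,2})>\tfrac12$, which (since the $b_{i,2}$ are distinct primes) is exactly (3) at stage $k+1$. The base case is handled directly by picking $m_0,n_0$ so that $b_{1,2}$ is a prime $\geq 3$, starting the product in (3) at $\geq 2/3$. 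The one delicate point is the simultaneous compatibility of the congruence conditions on $n_k$ with the size requirement (5) and with Dirichlet's coprimality hypothesis needed later for $m_k$; this is precisely what the coprimality $\gcd(b_{k,3},b_{i,2})=1$ and Dirichlet's theorem together provide.
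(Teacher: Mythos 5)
Your proposal is correct and follows essentially the same route as the paper's proof: induct, first choosing $n_k$ to satisfy congruence conditions modulo $\prod_{i\le k}b_{i,2}$ (securing condition (2) at stage $k+1$ and the coprimality of $n_kb_{k,3}+b_{k,4}$ to $b_{k,2}$ needed for Dirichlet) together with the lower bound (5), then choosing $m_k$ via Dirichlet's theorem so that $b_{k+1,2}$ is a large prime, which yields (1), (4) and (3). The only difference is in one sub-step: the paper locates the admissible residue class for $b_{k+1,3}$ by a pigeonhole argument relying on condition (3) (two subsets of the unit group modulo $\prod_{i\le k} b_{i,2}$, each of density greater than $\tfrac12$, must intersect), whereas you avoid the finitely many bad residues prime by prime via the Chinese remainder theorem; both are valid.
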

\begin{proof}
By induction. Assume we have chosen $n_1,m_1,n_2,m_2,...,n_{k-1},m_{k-1}$; we describe how to choose $n_k$ and then given this $n_k$ how to choose $m_k$. Consider congruence modulo $g:=\underset{i=1}{\overset{k}{\Pi}}b_{i,2}$. Choose a congruence class $[f]$ that is in the group of multiplicative units and so that $[f+b_{k,3}-b_{k,1}]$ is in the multiplicative group of units. This can be done by pigeon hole principle (by condition 3). Pick $n_k$ so that $b_{k+1,3} \in [f]$ and so that $n_k>\frac{b_{k,3}b_{k,2}+b_{k,2}}{b_{k,3}}$. This can be done because $b_{k,3}$ is relatively prime to the $b_{i,2}$. Next we pick $m_k$ so that $b_{k+1,2}$ is prime, $m_k>\frac{b_{k,2}b_{k+1,3}+b_{k,4}+b_{k+1,3}}{b_{k,2}}$ and condition 3 is satisfied. This can be done because we wish to find a prime in the arithmetic progression $n_{k}b_{k,3}+b_{k,4}+b_{k,2}\mathbb{N}$ and the starting point and the increment are relatively prime and the other conditions merely require choosing $m_k$ large enough. This is Dirichlet's Theorem (see for example \cite[Chapter 7]{dirichlet}).
\end{proof}
Let $c_k=b_{k+1,2}b_{k+2,3}+b_{k+2,3}+b_{k+2,4}+b_{k+1,4}$ and $d_k=b_{k+2,3}b_{k+2,2}+b_{k+2,2}$.

 Let $J$ be a subinterval in $I$ containing at least one level of a tower over $I^{(k)}$. This means that it contains at least 1 level from each of the 4 towers over $I^{(k+2)}$. For all $j>k$, $i>c_j$, $T^i(J)$ intersects every level of every tower over $I^{(j-1)}$. This is proved in the following lemmas. In these arguments it will be important to pick out a level from $O(I_2^{(k+2)})$ and $O(I_3^{(k+2)})$. These will be denoted $J'$.

%contains infinitely many primes and so that $\gcd(c, b_{i,2})=1$ and $\gcd(c-b_{k-1,1}+b_{k-1,3},d)=1$. This can be done by counting arguments. Pick $n_k$ so that $b_{k,3} \in [c]$. This can be done because $b_{k-1},3$ is relatively prime to the $b_{i,2}$. I can pick infinitely many $m_k$ so that $b_{k,2}$ is prime.
\begin{lem} At times $c_k$ to $d_k$ $J'$, a level in $O(I_3^{(k+2)})$, intersects every level of $O(I_2^{(k+1)})$.
\end{lem}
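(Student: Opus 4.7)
The plan is to route a sub-interval of $J'$ into the base $I_2^{(k+2)}$ via the Keane transition properties and then ride the long forced residence in $O(I_2^{(k+2)})$ --- which by conditions $4$ and $5$ persists through the window $[c_k,d_k]$ --- to produce visits to the coarse base $I_2^{(k+1)}$ that are dense enough in time to hit every level of $O(I_2^{(k+1)})$ at every instant in $[c_k,d_k]$.

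Concretely, since $J'$ is a level of $O(I_3^{(k+2)})$ at some height $a<b_{k+2,3}$, after $b_{k+2,3}-a$ iterates its image lies in $I_3^{(k+2)}\cup I_4^{(k+2)}$ by the property ``$I_3^{(k)}$ returns inside $I_3^{(k)}\cup I_4^{(k)}$''. Following the $I_4^{(k+2)}$-piece for another $b_{k+2,4}$ iterates drops a sub-interval $J''$ into $I_2^{(k+2)}$ at some time $t_1\le b_{k+2,3}+b_{k+2,4}$ (by ``$I_4^{(k)}$'s return covers $I_1^{(k)}$ and meets $I_2^{(k)}$''). The complementary Keane property then keeps $J''$ inside $O(I_2^{(k+2)})$ for the next $m_{k+2}b_{k+2,2}$ iterates, and by condition $4$ at level $k+2$ this residence comfortably covers $[c_k,d_k]$. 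Within each full traversal of $O(I_2^{(k+2)})$ (of length $b_{k+2,2}$), the column $A_{m_{k+1},n_{k+1}}e_2=(0,m_{k+1},n_{k+1},1)^{\tau}$ exhibits exactly $m_{k+1}$ visits to the coarse base $I_2^{(k+1)}$, each visit opening a $b_{k+1,2}$-step ascent of $O(I_2^{(k+1)})$. Moreover, pieces of $J'$ that stay in $I_3^{(k+2)}$ after the first return of the tower keep looping, producing a fresh ``cohort'' entering $I_2^{(k+2)}$ every $b_{k+2,3}$ time units --- a mechanism which continues for at least $n_{k+2}b_{k+2,3}$ iterates by condition $5$, hence throughout $[c_k,d_k]$.

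Translating the target into entry-language --- $T^i(J')\cap T^\ell(I_2^{(k+1)})\ne\emptyset$ is equivalent to $T^{i-\ell}(J')\cap I_2^{(k+1)}\ne\emptyset$ --- reduces the lemma to showing that every integer $j\in[c_k-b_{k+1,2}+1,d_k]$ is an entry time of some sub-piece of $J'$ into $I_2^{(k+1)}$. The main obstacle is verifying this time-density: a single orbit's consecutive entries into $I_2^{(k+1)}$ within one traversal of $O(I_2^{(k+2)})$ are exactly $b_{k+1,2}$ apart, so the finer temporal resolution must come from combining the $m_{k+1}m_{k+2}$ entries produced by the residence with the additional $n_{k+2}$ cohorts entering at spacing $b_{k+2,3}$. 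The quantitative heart of the proof is showing that these two interleaving schedules --- offsets $b_{k+1,2}$ within a traversal and $b_{k+2,3}$ between cohorts, played out over the full residence --- combine to cover every single integer in $[c_k-b_{k+1,2}+1,d_k]$; conditions $4$ and $5$ at the relevant levels are precisely what is needed to make this bookkeeping go through.
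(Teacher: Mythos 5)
Your setup is right and matches the paper's: you route $J'$ through the return of $I_3^{(k+2)}$ onto $I_4^{(k+2)}$, then through the return of $I_4^{(k+2)}$ into $I_2^{(k+2)}$ and on into $I_2^{(k+1)}$, you note that fresh cohorts enter with period $b_{k+2,3}$, and your reduction of the lemma to the statement that every integer in $[c_k-b_{k+1,2}+1,d_k]$ is a time at which some piece of the orbit of $J'$ sits in the base $I_2^{(k+1)}$ is correct and clean. But the proposal stops exactly where the lemma's content begins: you declare that the ``two interleaving schedules'' cover every integer in that window and that ``conditions 4 and 5 are precisely what is needed,'' without exhibiting the mechanism. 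That attribution is wrong, and it is not a presentational slip. Conditions 4 and 5 only control \emph{durations} (how long an inserted piece resides in $O(I_2^{(k+1)})$, and for how long new cohorts keep being generated); they say nothing about which residues modulo $b_{k+1,2}$ the entry times realize. Entries of a single resident piece are spaced exactly $b_{k+1,2}$ apart, so, as you yourself observe, they all lie in one residue class and contribute nothing new; the cohorts enter at times $t_0+jb_{k+2,3}$, and whether these realize \emph{all} residues mod $b_{k+1,2}$ is a number-theoretic question. The indispensable ingredient is $\gcd(b_{k+2,3},b_{k+1,2})=1$, which is what conditions 1 and 2 (primality of $b_{k+1,2}$ and $b_{k+1,2}\nmid b_{k+2,3}$) are for, and which your proof never invokes. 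Without it the entry times could all fall in a proper coset and the covering claim would simply be false.

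The paper's argument is: cohorts are inserted into $I_2^{(k+1)}$ at times $t_0+jb_{k+2,3}$; by coprimality the first $b_{k+1,2}$ of them (all present by time $c_k$, by the definition of $c_k$) realize every residue class mod $b_{k+1,2}$, hence at any single instant they simultaneously occupy every level of $O(I_2^{(k+1)})$; condition 4 ($m_{k+1}b_{k+1,2}>b_{k+2,3}b_{k+1,2}$) guarantees no cohort leaves $O(I_2^{(k+1)})$ before its replacement in the same residue class has been inserted; and condition 5 guarantees the insertion mechanism (pieces of the orbit of $J'$ returning inside $I_3^{(k+2)}$ and covering $I_4^{(k+2)}$ every $b_{k+2,3}$ steps) persists through time $d_k$. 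A secondary index slip: the residence you invoke, $m_{k+2}b_{k+2,2}$ steps in $O(I_2^{(k+2)})$, is the wrong tower for this lemma --- what matters is the $m_{k+1}b_{k+1,2}$-step residence in $O(I_2^{(k+1)})$ of a piece inserted into $I_2^{(k+1)}$. To complete your proof you must add the coprimality step; as written, the argument has a genuine gap at its quantitative core.
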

\begin{proof} There exists $0<i \leq b_{k+2,3} $ (it is equal to $b_{k+2,3}$ for $I_3^{(k+2)}$ but for pieces of the orbit it is less than) such that $I_4^{(k+2)} \subset T^i(I_3^{(k+2)})$. So ${T^{i+b_{k+2,4}}(I_3^{(k+2)}) \cap I_2^{(k+2)} \neq \emptyset}$. Also $$T^{i+b_{k+2,4}+b_{k+1,4}}(I_3^{(k+2)}) \cap I_2^{(k+1)} \neq \emptyset.$$ In fact, $$T^{i+b_{k+2,4}+b_{k+1,4}+jb_{k+2,3}}(I_3^{(k+2)}) \cap I_2^{(k+1)} \neq \emptyset$$ for $j<n_{k+3}$.
 Pieces of $I_3^{(k+2)}$ are inserted into $I_2^{(k+1)}$ with a delay of $b_{k+2,3}$ which is coprime to $b_{k+1,2}$.  It follows that $T^{c_k}(J')$ intersects every level of $O(I_2^{(k+1)})$. By condition 5 it follows that $T^r(J')$ intersects every level of $O(I_2^{(k+1)})$ for $c_k \leq r \leq d_k$. Moreover, the pieces inserted take $m_{k+1}b_{k+1,2}$ to leave $O(I_2^{(k+2)})$. Because $m_{k+1}b_{k+1,2}>b_{k+2,3}b_{k+1,2}$ (condition 4) the piece does not leave $O(I_2^{(k+2)})$ before another is inserted into its level.
%I will also need to use the next lemma to make sure that they are all hit.
\end{proof}
%\begin{lem}
%At times $c_k$ to $d_k$ $I_3^{(k+2)}$ intersects every level of every tower over $I^{(k-1)}$.
%\end{lem}
%\begin{proof} This follows from the fact that a piece of $O(I_2^{(k+1)})$ lies in every level of the tower over $I^{(k-1)}$. This follows form the fact that a piece of $O(I_2^{k+1})$ lies in every level of every tower over $I^{(k)}$ except for those corresponding to $O(I_1^{(k)})$. Since $O(I_2^{(j+1)})$ intersects every piece of $O(I_2^{(j)})$ it follows that the levels of the towers over $I^{(k-1)}$ not corresponding to $O(I_1^{(k-1)})$ are intersected. However, there are pieces of $O(I_4^{(k)})$ in the levels corresponding to $O(I_1^{(k-1)})$, so there are pieces of $O(I_2^{(k+1)})$. One can alternately also point out that every entry of $A_{m_{k-1},n_{k-1}}A_{m_k,n_k}e_2$ is non-zero (this arithmetic reflects the very fact that the lemma is trying to prove).
%\end{proof}

%\begin{lem}
%$\frac{b_{k+2,2}-2b_{k+2,4}-b_{k+2,3}}{ n_{k+2}}>b_{k+1,2}$
%\end{lem}

\begin{lem} At times $d_k$ to $c_{k+1}$ $J'$, a level in $O(I_2^{(k+2)})$, intersects every piece of $O(I_3^{(k+2)})$.
\end{lem}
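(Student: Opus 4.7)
The plan is to mirror the preceding lemma with the roles of $I_2^{(k+2)}$ and $I_3^{(k+2)}$ exchanged. Because both intervals now sit at the same inducing level $k+2$, no extra lift by $b_{k+1,4}$ is needed, and the argument is more direct than before. From the Keane description of the induced $(4213)$ IET on $I^{(k+2)}$, $T^{b_{k+2,2}}(I_2^{(k+2)}) \cap I_3^{(k+2)} \neq \emptyset$ (``when $I_2^{(k+2)}$ returns to $I^{(k+2)}$ it intersects $I_3^{(k+2)}$''), and any piece that enters $I_3^{(k+2)}$ stays in $O(I_3^{(k+2)})$ for the next $n_{k+2} b_{k+2,3}$ iterations, returning to the base level $I_3^{(k+2)}$ once every $b_{k+2,3}$ steps.

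Writing $J' = T^j(I_2^{(k+2)})$, the conclusion that $T^r(J')$ meets every level of $O(I_3^{(k+2)})$ is equivalent, by applying $T^{-\ell}$ to each level $T^\ell(I_3^{(k+2)})$, to showing $T^u(I_2^{(k+2)}) \cap I_3^{(k+2)} \neq \emptyset$ for every $u$ in the window $[r+j-b_{k+2,3}+1,\ r+j]$ of $b_{k+2,3}$ consecutive integers. The set of such insertion times $u$ contains
\[
\{k\, b_{k+2,2} + \ell\, b_{k+2,3}\ :\ k \geq 1,\ 0 \leq \ell < n_{k+2}\},
\]
one term for each cycle of $I_2^{(k+2)}$ around its own tower (period $b_{k+2,2}$) and each subsequent return of the inserted piece to the base of $O(I_3^{(k+2)})$ (gap $b_{k+2,3}$, for $n_{k+2}$ cycles).

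Given any $u$, solving $k\, b_{k+2,2} \equiv u \pmod{b_{k+2,3}}$ pins $k$ to a unique residue class modulo $b_{k+2,3}$; this uses $\gcd(b_{k+2,2}, b_{k+2,3}) = 1$, which follows because $b_{k+2,2}$ is prime (condition 1) and strictly larger than $b_{k+2,3}$ (a consequence of condition 4 via the choice of $m_{k+1}$). The constraint $0 \leq \ell < n_{k+2}$ then confines $k$ to an interval of length $n_{k+2} b_{k+2,3}/b_{k+2,2}$, which exceeds $b_{k+2,3}$ by condition 5 and so contains a $k$ in the required residue class. Hence every $u \geq b_{k+2,2}$ is an insertion time, and the lower bound $r \geq d_k = b_{k+2,2} b_{k+2,3} + b_{k+2,2}$ ensures this holds throughout the window. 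The upper bound $r \leq c_{k+1}$ is simply where the current lemma hands off to subsequent ones; no further constraint arises in the argument itself. The main obstacle is these two bookkeeping inequalities — coprimality and length-of-interval — each of which is exactly where one of conditions 1, 4, and 5 is invoked, mirroring the use of the same conditions in the previous lemma.
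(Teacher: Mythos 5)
Your reformulation (pulling the level index $\ell$ of $O(I_3^{(k+2)})$ back through $T^{-\ell}$ so that the claim becomes ``every $u$ in a window of $b_{k+2,3}$ consecutive integers is an insertion time'') is a correct and clean repackaging of the paper's argument, and your uses of coprimality and condition 5 match the paper's. But there is a genuine gap in your description of the insertion times. You assert that the set of times $u$ with $T^u(I_2^{(k+2)})\cap I_3^{(k+2)}\neq\emptyset$ contains $\{k\,b_{k+2,2}+\ell\,b_{k+2,3}: k\geq 1,\ 0\leq \ell<n_{k+2}\}$ with \emph{no upper bound on $k$}. That is false: the itinerary ``stay in $O(I_2^{(k+2)})$ for $k$ full cycles of length $b_{k+2,2}$, then enter $O(I_3^{(k+2)})$'' is only available for $k$ up to roughly $m_{k+2}$, since a piece of $I_2^{(k+2)}$ returns to its own base only about $m_{k+2}$ times before it has entirely drained out of that tower (this is the $(m-1,m,0,0)$ row of $A_{m,n}$, and the paper's proof is explicit about it: ``$I_3^{(k+2)}\cap T^{i+jb_{k+2,2}}\neq\emptyset$ for $j<m_{k+2}$''). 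For larger $k$ the time of the $k$-th return, if it happens at all, is no longer $k\,b_{k+2,2}$.

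This is exactly why your claim that ``the upper bound $r\leq c_{k+1}$ is simply where the lemma hands off'' is wrong: the bound $r\leq c_{k+1}$, together with condition 4 in the form $c_{k+1}<m_{k+2}b_{k+2,2}$, is what guarantees that the residue-class representative $k$ you select (the largest one with $k\leq u/b_{k+2,2}$, which keeps $\ell<n_{k+2}$ via condition 5) satisfies $k\leq m_{k+2}$ and hence corresponds to an actual insertion. You do cite condition 4, but only to get $b_{k+2,2}>b_{k+2,3}$, which is a side use; its essential role here is to cap the number of cycles needed. Relatedly, your conclusion ``every $u\geq b_{k+2,2}$ is an insertion time'' overshoots: to find a \emph{positive} $k$ in the prescribed residue class one needs $u\gtrsim b_{k+2,2}b_{k+2,3}$, which is supplied by the lower bound $u>d_k-b_{k+2,3}$, not by $u\geq b_{k+2,2}$. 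Both endpoints of the interval $[d_k,c_{k+1}]$ therefore do real work, and the argument as written does not close without restoring the constraint $k\lesssim m_{k+2}$ and checking it against $c_{k+1}$.
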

\begin{proof} There exists $0<i \leq b_{k+2,2} $ (it is equal to $b_{k+2,2}$ for $I_2^{(k+2)}$ but for pieces of the orbit it is less than) such that $I_3^{(k+2)} \cap T^i(I_2^{(k+2)}) \neq \emptyset$. Also $I_3^{(k+2)} \cap T^{i+jb_{k+2,2}} \neq \emptyset$ for $j<m_{k+2}$. Because $b_{k+2,2}$ is relatively prime to $b_{k+2,3}$ we have $T^{i+jb_{k+2,2}}(I_2^{(k+2)})$ intersects each level of $O(I_3^{(k+2)})$ for $ j = b_{k+2,3}$. It follows from condition 4 that $T^{r}(I_2^{(k+2)})$ intersects each level of $O(I_3^{(k+2)})$ for $d_k \leq r\leq c_{k+1}$. Moreover, the pieces inserted take $(n_{k+2}-1)b_{k+2,3}$ to leave $O(I_3^{(k+2)})$. Because $(n_{k+2}-1)b_{k+2,3}>b_{k+2,2}b_{k+2,3}$ (condition 5) the piece does not leave $O(I_3^{(k+1)})$ before another is inserted into its level.
%So $T^{i+b_{k+1,4}}(I_2^{(k+2)}) \cap I_1^{k+1} \neq \emptyset$. So $T^{i+b_{k+1,4}+b_{k+1,1}}(I_2^{(k+2)}) \cap I_3^{(k+1)} \neq \emptyset$. New pieces enter with a delay of $b_{k+2,2}$. This is prime so after $b_{k+1,3}$ iterations the lemma is satisfied.
\end{proof}
%\begin{lem} At times $d_{k}$ to $c_{k+1}$ $I_2^{(k+2)}$ intersects every level of every tower over $I^{(k)}$.
%\end{lem}
%\begin{proof} Every entry of $A_{m_k,n_k}A_{m_{k+1},n_{k+1}}e_3$ is non-zero, so $O(I_3^{(k+2)})$ intersects every level of every tower over $I^{(k)}$.
%\end{proof}
\begin{proof}[Proof of Theorem 1] Let $J_1, J_2$ be any two nonempty intervals in $I$. Therefore there exists $k_0$ such that both contain some level of a tower over $I^{(k_0)}$. This implies that they contain a level from each tower over $I^{(k)}$ for all $k>k_0+1$. This implies that $T^n(J_1) \cap J_2 \neq \emptyset$ for $n \in [c_{k},d_{k}]$ because $J_1$ contains a level of $I_3^{(k+2)}$ and $J_2$ contains a level of $I_2^{(k+1)}$. Also $T^n(J_1) \cap J_2 \neq \emptyset$ for $n \in [d_{k},c_{k+1}]$ because $J_1$ contains a level of $I_2^{(k+2)}$ and $J_2$ contains a level of $I_3^{(k+2)}$. It follows that $T^n(J_1) \cap J_2 \neq \emptyset$ for any $n>c_{k_0+1}$.
\end{proof}

%\begin{rem} I should be able to do this for $ O(I_1^{(k+2)})$. I should also be able to do this quicker for earlier towers. Maybe this will give me u.e. \end{rem}

\section{No IET is topologically mixing of all orders}
The argument is a straightforward application of \cite{nomixing}. Let $T$ be a $d$-IET.
Observe that a topologically mixing IET must be minimal (otherwise it splits into disjoint invariant components).
Let $J$, $J'$ be any disjoint intervals bounded by discontinuities of $T^l$ for some $l$,  and $n_1,...,n_{d^2}$ be natural numbers. We will find a violation of topological mixing of order $d^2+1$ at bigger times. Pick an interval $V$ such that all of the first returns to $V$ are greater than $\max\{l,n_1,...,n_{d^2}\}$. We may also choose $V$ so that $T|_V$ is an $s$-IET for some $s\leq d$. By our assumption that the return times to $V$ are larger than $l$, each level of a tower over $V$ is either contained in $J$ or disjoint from $J$. Let $U_1,U_2,...,U_s$ be its subintervals. $T|_{U_i}$ is an $s_i$-IET for $s_i \leq s$. Call its intervals $U_{i,1},...,U_{i,s_i}$ and their return times $r_{i,1},...,r_{i,s_i}$. If $x \in O(U_i)\cap J$ and $x \in O(U_{i,j})$ then $T^{r_{i,j}}(x) \in J$. This is because $x \in T^k(U_i) \subset J$ for some $k<r_i$, in fact $x \in T^{k}(U_{i,j})$. $T^{r_{i,j}-k}(x) \in U_i$. So $T^k(T^{r_{i,j}-k}(x)) \in T^k(U_i) \subset J$. Therefore $\underset{i,j=1 }{\overset{d}{\cap}}T^{r_{i,j}}(J) \cap J'=\emptyset$.
%Good times for $I_3^{(k+2)}$ $b_{k+2,3}+b_{k+1,4}+b_{k+2,3}b_{k+1,2}$ to $n_{k+3}b_{k+2,3}$.\\
%Good times for $I_2^{(k+2)}$ $b_{k+2,2}+b_{k+1,4}+b_{k+1,1} + b_{k+1,3}b_{k+2,2}$ to $m_{k+3}b_{k+2,2}$.\\
%So I need $b_{k+2,3}+b_{k+1,4}+b_{k+2,3}b_{k+1,2}<m_{k+2}b_{k+2,1}$ and $b_{k+2,2}+b_{k+1,4}+b_{k+1,1} + b_{k+1,3}b_{k+2,2}<n_{k+3,3}b_{k+2,3}$.

%Question: Can I prove mixing of all orders?

\section{Acknowledgments}

I would like to thank my advisor, M. Boshernitzan, for posing this problem, helpful conversations and encouragement. %M. Boshernitzan suggested two years ago to look at Keane's construction, which I continue to find fruitful.
 I would also like to thank A. Bufetov and W. Veech for helpful conversations and encouragement. I would like to thank the referee for suggestions that improved the paper. I was supported by Rice University's Vigre grant DMS-0739420 and a Tracy Thomas award while working on this paper.

\end{document}